\newcommand{\db}{\mbox{\rm {{\tiny d-braid}}}}
\newcommand{\beeq}{\begin{equation}}
\newcommand{\eneq}{\end{equation}}
\newcommand{\bece}{\begin{center}}
\newcommand{\ence}{\end{center}}
\newcommand{\mb}{\mathbf}
\newtheorem{thm}{Theorem}[section]
\newtheorem{pro}[thm]{Proposition}
\newtheorem{lem}[thm]{Lemma}
\theoremstyle{definition}
\newtheorem{ex}[thm]{Example}
\newtheorem{defn}[thm]{Definition}
\newenvironment{rem}[1][Remark.]{\begin{trivlist}
\item[\hskip \labelsep {\bfseries #1}]}{\end{trivlist}}
\newcommand{\cellsize}{10}
\newlength{\cellsz} \setlength{\cellsz}{\cellsize\unitlength}
\newsavebox{\cell}
\sbox{\cell}{\begin{picture}(\cellsize,\cellsize)
\put(0,0){\line(1,0){\cellsize}}
\put(0,0){\line(0,1){\cellsize}}
\put(\cellsize,0){\line(0,1){\cellsize}}
\put(0,\cellsize){\line(1,0){\cellsize}}
\end{picture}}
\newcommand\cellify[1]{\def\thearg{#1}\def\nothing{}%
\ifx\thearg\nothing
\vrule width0pt height\cellsz depth0pt\else
\hbox to 0pt{\usebox{\cell} \hss}\fi%
\vbox to \cellsz{
\vss
\hbox to \cellsz{\hss$#1$\hss}
\vss}}
\newcommand\tableau[1]{\vtop{\let\\\cr
\baselineskip -16000pt \lineskiplimit 16000pt \lineskip 0pt
\ialign{&\cellify{##}\cr#1\crcr}}}
\begin{document}

\title{Sorting and generating reduced words}

\author{Olcay Coşkun} \author{Müge Taşkın}\thanks{Both of the authors are supported by Boğaziçi - Bap - 6029.}
\address{Boğaziçi Üniversitesi, Matematik Bölümü, Bebek, İstanbul, Turkey}
\email{olcay.coskun@boun.edu.tr, muge.taskin@boun.edu.tr}
\date{10, 2012}
\begin{abstract}
We introduce a partial order on the set of all reduced words of a
given permutation $\omega$, called \emph{directed-braid poset} of $\omega$. This
poset enables us to produce two algorithms: One is a sorting
algorithm applied on any reduced word of $\omega$ and aims to obtained
the natural word (lexicographically largest reduced word); the other
one is a generation algorithm applied on the natural word and aims
to obtained the set of all reduced words of $\omega$.
\end{abstract}
\maketitle

\section{Introduction}

The symmetric group  $S_n$  on $[n]= \{1,2,\ldots, n\}$ is generated
by adjacent transpositions $\{ s_i: i= 1,2,\ldots n-1\}$, where
$s_i$ stands for the transposition $(i,i+1)$. Given any permutation
$\omega\in S_n$, and any expression $s_{i_1}s_{i_2}\ldots s_{i_l}$
representing $\omega$, we call the sequence  $i_1i_2\ldots i_l$ a
\textit{word} for $\omega$. Such an expression with minimal $l$ is
called a \textit{reduced word}  for $\omega$ and  $l$ is called as
the length of $\omega$, denoted by $\mathrm
{l}(*omega)$. The index $l$ is
determined by $\omega$ and, by a result of Tits, any two
reduced words for $\omega$ are related by the braid relations given
as follows.
\begin{enumerate}
\item (Short braid relation) $s_is_j = s_js_i$ for any $i,j$ with $\vert i-j\vert \ge 2$.
\item (Long braid relation) $s_is_{i+1}s_{i} = s_{i+1}s_is_{i+1}$ for any $i$.
\end{enumerate}

For any permutation $\omega$, we denote by $\mathcal{R}(\omega)$ the set of
all reduced words of $\omega$. By the above result, any reduced word can be
obtained from any other by applying a series of braid relations. However this
procedure of applying a sequence of braid relations is non-trivial
since one needs to go back and forth using the braid relations. For that reason, we introduce the
\textit{directed-braid poset} $(\mathcal{R}(\omega),<_{\mathrm{d-braid}})
$ where the underlying partial order is obtained by putting a
direction on the applications of the short and long braid relations.

The partial order $<_{\mathrm{d-braid}}$ is weaker than the lexicographic order on $\mathcal R(\omega)$.
Moreover the unique maximal word in $(\mathcal R(\omega), \le_{\mathrm lex})$ remains as the unique
maximal element in the directed-braid poset. This word is the one that we obtain when we apply the selection sort 
algorithm to the one line notation of the permutation. The algorithm aims to convert the permutation to the identity,
by moving, at each step, the smallest number in $\omega$ which is not in its identity position.

The maximal element can also be described by tower diagrams introduced in \cite{CT}, where it is called the 
\emph{natural word} of the permutation. We will use the notation of \cite{CT}. By \cite[Proposition 5.1]{CT}, the
natural word can be characterized as a list of increasing sequences of consecutive integers with decreasing first
 terms.  
 
With the direction provided by directed-braid poset, we then
introduce a sorting algorithm on any reduced word of $\omega$, which at
the end obtains the natural word of $\omega$ as chain in directed-braid
poset. Thus we obtain a canonical route between any two reduced
words of $\omega$ which goes through the natural word. This algorithm can be seen as the combination of
selection sort and insertion sort algorithms.

Having the canonical
route provided by the sorting algorithm, we then introduce  a
generation algorithm for all reduced words of given permutation,
starting from its natural word.  At the first step, we produce
\textit{basic words} of the given permutation. This step is
basically the application of the long braid relation to the natural
word. After determining the set of all basic words, it only remains
to apply the short braid relation, which is done by the
\emph{restricted shuffle} on basic words. This step is a variation
of the well-known shuffle operation on words.

There are combinatorial objects  to determine the set of all reduced words of a given permutation, such as
balanced labeling of the Rothe diagram \cite{FGRS} and RC graphs \cite{BB} of permutations, the
plactification map \cite{RS} and the tower tableaux \cite{CT}. Generating reduced words from these objects is not efficient. Two examples of
 more efficient algorithms are the one that counts saturated chains in the weak order on permutations and the one
 which uses heaps of reduced words \cite{JS}. See \cite{G} for more information on these algorithms.
We leave the comparison of our algorithm, in terms of computational
complexity, with these algorithms as a problem to the interested
reader.

The paper is organized as follows. In Section \ref{Section:poset}, we
introduce the directed-braid poset together with its basic
properties. The sorting  and the generation algorithms are
introduced in \ref{Section:sorting} and \ref{Section:generation}
respectively. To illustrate our algorithm, we produce the reduced words of the
longest permutation in $S_4$.

The following notations will be used throughout the paper.
Let $\mathfrak{b} = \beta_1\beta_2\ldots \beta_l$ be a word with
$l\ge 1$. We call $\mathfrak{b}$ a \textit{tower word} if for any
index $1<i\le l$, we have $\beta_i = \beta_{i-1}+1$; that is, if
$\mathfrak{b}$ is an increasing sequence of consecutive integers.
 It is clear, from the construction in the above discussion, that if $\mathfrak{b}$ is a tower word, then it is the natural word
 of a tower diagram with a unique tower of positive length. For example, $4567$ is a tower word.

It is also clear that any word $\beta =\beta_1\beta_2\ldots \beta_n$
can be written uniquely as the concatenation of tower words, say as
$\beta= \mathfrak{b}_1\mathfrak{b}_2\ldots \mathfrak{b}_s$ for some
$s$, where $\mathfrak{b}_i$ is called $i$-th tower word in $\beta$.
We call $\mathfrak{b}_1\mathfrak{b}_2\ldots \mathfrak{b}_s$ as the
\textit{tower decomposition} of $\beta$. For example, if $\beta =
789 5 453456 2$, then the tower decomposition is given by
$$\beta = \underset{\mathfrak{b}_1}{\underbrace{789}}\underset{\mathfrak{b}_2}{\underbrace{5}} \underset{\mathfrak{b}_3}
{\underbrace{45}}
\underset{\mathfrak{b}_4}{\underbrace{3456}}\underset{\mathfrak{b}_5}{\underbrace{2}}.$$

\section{The directed-braid poset of a permutation}\label{Section:poset}
In this section, we introduce the directed-braid poset  of a
permutation $\omega$ as a main tool for constructing a sorting
algorithm on the set $\mathcal{R}(\omega)$ of all reduced words of
$\omega$.  This poset will be the combination of two relations
defined as follows.
\begin{defn}
Let $\alpha,\beta\in \mathcal{R}(\omega)$ be two reduced words and
$l=\mathrm{l}(\omega)$. We write $\alpha <_1 \beta$ if there exist
$1\leq i < l$ such that
$$\begin{aligned}
&\alpha=\alpha_1\ldots \alpha_{i} \alpha_{i+1}\ldots \alpha_{l}, \\
&\beta=\alpha_1\ldots \alpha_{i+1} \alpha_{i}\ldots \alpha_{l} \text{ and}\\
&\alpha_{i+1}\geq \alpha_i+2. \end{aligned}$$
\end{defn}
Clearly this covering relation refers to the short braid relation and also puts a restriction on the direction that we
can apply it. A key property of this relation is the following lemma.
\begin{lem}\label{lem:1vslex}
The relation $<_1$ implies the lexicographic order, that is, if
$\alpha<_1 \beta$, then we also have $\alpha\le_{lex}\beta$.
\end{lem}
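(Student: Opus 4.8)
The plan is to argue directly from the definition of the lexicographic order, since the relation $<_1$ is tailored precisely to produce one step of lexicographic progress. First I would record that, because $\alpha$ and $\beta$ both lie in $\mathcal{R}(\omega)$, they are words of the same length $l = \mathrm{l}(\omega)$, so comparing them with respect to $\le_{lex}$ amounts to comparing the integer sequences $\alpha_1\alpha_2\ldots\alpha_l$ and $\beta_1\beta_2\ldots\beta_l$ entry by entry from the left and looking at the first place where they differ.

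Next I would locate that first position of disagreement. By the definition of $<_1$ there is an index $i$ with $\beta = \alpha_1\ldots\alpha_{i-1}\,\alpha_{i+1}\,\alpha_i\,\alpha_{i+2}\ldots\alpha_l$, so $\alpha_j = \beta_j$ for every $j < i$. At position $i$ the word $\alpha$ has entry $\alpha_i$ while $\beta$ has entry $\alpha_{i+1}$, and the hypothesis $\alpha_{i+1}\ge \alpha_i+2$ gives in particular $\alpha_{i+1} > \alpha_i$, hence $\alpha_i \ne \alpha_{i+1}$. Therefore position $i$ is genuinely the first index at which $\alpha$ and $\beta$ differ, and the entry of $\beta$ there is strictly larger than that of $\alpha$. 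By the definition of the lexicographic order this yields $\alpha <_{lex}\beta$, and \emph{a fortiori} $\alpha\le_{lex}\beta$.

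I do not expect any real obstacle: the statement is essentially immediate once the definitions are unwound. The only points meriting a moment's care are (i) noting that $\alpha$ and $\beta$ have equal length, so that the lexicographic comparison is meaningful and is settled at the first disagreement, and (ii) observing that the inequality $\alpha_{i+1}>\alpha_i$ forced by $\alpha_{i+1}\ge \alpha_i+2$ is exactly what guarantees position $i$ is a true disagreement rather than a coincidental tie. Note that the full strength $\alpha_{i+1}\ge\alpha_i+2$ is not needed for this lemma — only $\alpha_{i+1}>\alpha_i$ is used — although the stronger bound is of course what makes $<_1$ correspond to a legitimate short braid move in the first place.
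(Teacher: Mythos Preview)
Your argument is correct and is exactly the unpacking the paper has in mind: the paper itself only remarks that the proof ``follows easily from the definition,'' and your proposal supplies precisely that verification. Your side observations about equal length and about only needing $\alpha_{i+1}>\alpha_i$ are accurate and welcome, but there is no substantive difference in approach.
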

The proof follows easily from the definition. It is trivial that the
converse is not true. For example $121\le_{lex} 212$ but $121\not<_1
212$. As a corollary to this lemma, we get that the reflexive   and
transitive closure of the relation generated by $<_1$ is a partial
order. Indeed, the only non-trivial property is that of
anti-symmetry which is guaranteed by the above lemma.

In order to define the second relation, we will first introduce
several notations: Let $\mathfrak{a}$ be a tower word. Then we
denote by
\begin{enumerate}
\item[i.] $\widetilde{\mathfrak{a}}$ the tower word obtained by
increasing the numbers in the tower word $\mathfrak{a}$ by $1$
\item[ii.] $\mathrm{in}(\mathfrak{a})$  the initial letter of
$\mathfrak{a}$.
\item[iii.] $\mathrm{fin}(\mathfrak{a})$  the final letter of
$\mathfrak{a}$.
\end{enumerate}

The definition of the second relation which  depends the tower
decomposition of reduced words is as follows:
\begin{defn} Let $\alpha, \beta \in \mathcal{R}(\omega)$ and let
$\alpha=\mathfrak{a}_1\ldots \mathfrak{a}_i\mathfrak{a}_{i+1}\ldots
\mathfrak{a}_s$ be the tower decomposition of $\alpha$. We say
$\alpha<_2 \beta$ if there exist $1\leq i < s$ such that
$$\begin{aligned}
&\mathrm{in}(\mathfrak{a}_i)\leq
\mathrm{in}(\mathfrak{a}_{i+1})<\mathrm{fin}(\mathfrak{a}_{i+1})<\mathrm{fin}(\mathfrak{a}_i)
\\
\text{ and }& \beta=\mathfrak{a}_1\ldots
\mathfrak{a}_{i-1}\widetilde{\mathfrak{b}_1}\mathfrak{a}_i\mathfrak{b}_2\mathfrak{a}_{i+2}\ldots
\mathfrak{a}_s
 \end{aligned}
$$
where the tower words $\mathfrak{b}_1$ and (possibly empty)
$\mathfrak{b}_2$,  satisfy that
$$\mathfrak{b}_1\mathfrak{b}_2=\mathfrak{a}_{i+1}.$$
\end{defn}

\begin{rem} First observe that
the representation of $\beta$ in the above definition is not
necessarily the tower decomposition of $\beta$, since
$\mathfrak{a}_{i-1}\widetilde{\mathfrak{b}_1}$ might already be a
tower word.

Secondly  the condition that $\mathrm{in}(\mathfrak{a}_i)\leq
\mathrm{in}(\mathfrak{a}_{i+1})<\mathrm{fin}(\mathfrak{a}_i)$
necessarily implies
$\mathrm{fin}(\mathfrak{a}_{i+1})<\mathrm{fin}(\mathfrak{a}_i)$,
since the other case yields a contradiction to the fact that
$\alpha$ is a reduced word.
\end{rem}

As an example, consider the following three reduced words
$$\alpha = 23\,\,
5678\,\, 67,~~ \beta = 23\,\, 78\,\, 5678 \text{ and } \gamma =
23\,\, 7\,\, 5678\,\,  7$$ of the  permutation $w= 134268975$ given
by their tower decompositions. Then we have
$\alpha <_2
\beta \text{ and } \alpha<_2\gamma \text{ and also } \gamma<_2 \beta.
$

It is clear that via $<_2$, we are moving tower words from right to
left and letter by letter. During these moves, the condition is
basically given by  a series of braid relations which always include
a long one.  As in the previous case, we have the following lemma.

\begin{lem}\label{lem:2vslex}
The relation $<_2$ implies the lexicographic order, that is, if
$\alpha<_2 \beta$, then we also have $\alpha\le_{lex}\beta$.
\end{lem}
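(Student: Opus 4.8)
The plan is to compare $\alpha$ with $\beta$ letter by letter and locate the first position at which they differ. Write $\alpha=\mathfrak a_1\ldots\mathfrak a_{i-1}\mathfrak a_i\mathfrak a_{i+1}\ldots\mathfrak a_s$ and $\beta=\mathfrak a_1\ldots\mathfrak a_{i-1}\widetilde{\mathfrak b_1}\mathfrak a_i\mathfrak b_2\mathfrak a_{i+2}\ldots\mathfrak a_s$, where $\mathfrak b_1\mathfrak b_2=\mathfrak a_{i+1}$ and $\mathrm{in}(\mathfrak a_i)\le\mathrm{in}(\mathfrak a_{i+1})<\mathrm{fin}(\mathfrak a_{i+1})<\mathrm{fin}(\mathfrak a_i)$. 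The two words agree on the prefix $\mathfrak a_1\ldots\mathfrak a_{i-1}$, so the first possible discrepancy occurs at the first letter of block $i$: in $\alpha$ this letter is $\mathrm{in}(\mathfrak a_i)$, while in $\beta$ it is $\mathrm{in}(\widetilde{\mathfrak b_1})=\mathrm{in}(\mathfrak b_1)+1=\mathrm{in}(\mathfrak a_{i+1})+1$.

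The key inequality is then immediate from the defining hypothesis of $<_2$: since $\mathrm{in}(\mathfrak a_i)\le\mathrm{in}(\mathfrak a_{i+1})$, we get
$$\mathrm{in}(\mathfrak a_i)\le\mathrm{in}(\mathfrak a_{i+1})<\mathrm{in}(\mathfrak a_{i+1})+1=\mathrm{in}(\widetilde{\mathfrak b_1}).$$
So the letter of $\beta$ at this position is strictly larger than the corresponding letter of $\alpha$, which gives $\alpha<_{lex}\beta$ (in particular $\alpha\le_{lex}\beta$), unless the prefixes before this position do not in fact agree — but they do, since $\beta$ literally begins with $\mathfrak a_1\ldots\mathfrak a_{i-1}$. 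One subtlety worth spelling out: a priori one should make sure that the prefix $\mathfrak a_1\ldots\mathfrak a_{i-1}$ of $\beta$ is not a proper prefix of a longer common agreement, i.e.\ that $\mathfrak a_1\ldots\mathfrak a_{i-1}$ is really where $\beta$ and $\alpha$ first can differ; but since both words have the same length $l=\mathrm l(\omega)$ and the letter in position $|\mathfrak a_1\ldots\mathfrak a_{i-1}|+1$ already differs as shown, the comparison is settled there and no issue arises.

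The main (and only) obstacle is bookkeeping: one must be careful that $\widetilde{\mathfrak b_1}$ is nonempty (it is, since $\mathfrak b_1$ is nonempty because $\mathfrak b_2$ is the possibly-empty part), so that the letter $\mathrm{in}(\widetilde{\mathfrak b_1})$ actually exists and sits in position $|\mathfrak a_1\ldots\mathfrak a_{i-1}|+1$ of $\beta$. Once this is checked, the argument is a one-line comparison of two integers. Hence the proof follows directly from the definition of $<_2$, exactly as in the case of Lemma~\ref{lem:1vslex}.
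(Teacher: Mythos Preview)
Your argument is correct and is exactly the unpacking of what the paper means when it says the proof ``follows from the definition'': the common prefix $\mathfrak a_1\ldots\mathfrak a_{i-1}$ forces the first discrepancy to occur at position $|\mathfrak a_1\ldots\mathfrak a_{i-1}|+1$, and there the defining inequality $\mathrm{in}(\mathfrak a_i)\le\mathrm{in}(\mathfrak a_{i+1})$ together with $\mathrm{in}(\widetilde{\mathfrak b_1})=\mathrm{in}(\mathfrak a_{i+1})+1$ gives the strict lexicographic comparison. Your care in noting that $\mathfrak b_1$ is nonempty (so that $\mathrm{in}(\widetilde{\mathfrak b_1})$ exists) is the only point that needed checking, and you handled it.
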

Again, the proof follows from the definition and clearly the
converse is not true. For example $12 4\le_{lex} 142$ but
$124\not<_2 142$. Moreover, the reflexive and the transitive closure
of the relation generated by $<_2$ is a partial order.

Now we define a partial order on the set of all reduced words.
\begin{defn} For  $\alpha, \beta \in \mathcal{R}(\omega)$, we write
$$\alpha  \leq_{\db} \beta$$ if
either $\alpha = \beta$ or there is a sequence
$\gamma^0,\gamma^1,\ldots,\gamma^m$ of reduced words in
$\mathcal{R}(\omega)$ such that $\gamma^0 = \alpha, \gamma^m = \beta$ and
for any $i, 0\le i \le m-1$, we have either $\gamma^i<_1
\gamma^{i+1}$ or $\gamma^i<_2 \gamma^{i+1}$.
\end{defn}

 We have the following result.

\begin{pro}
The  relation $\le_{\db}$ on $\mathcal R(\omega)$ is a partial
order.
\end{pro}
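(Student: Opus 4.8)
The plan is to verify the three defining properties of a partial order for $\le_{\db}$: reflexivity, transitivity, and anti-symmetry. Reflexivity holds by definition, since $\alpha \le_{\db} \alpha$ is built into the definition. Transitivity is also immediate: if $\alpha \le_{\db} \beta$ via a chain $\gamma^0, \ldots, \gamma^m$ and $\beta \le_{\db} \eta$ via a chain $\delta^0, \ldots, \delta^k$, then concatenating the two chains (identifying $\gamma^m = \delta^0 = \beta$) exhibits $\alpha \le_{\db} \eta$; one only needs to handle the trivial cases $\alpha = \beta$ or $\beta = \eta$ separately, which is routine.

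The substantive point is anti-symmetry. The key observation is that each covering step $\gamma^i <_1 \gamma^{i+1}$ or $\gamma^i <_2 \gamma^{i+1}$ strictly increases the word in lexicographic order: this is exactly the content of Lemma \ref{lem:1vslex} and Lemma \ref{lem:2vslex}, together with the fact that in both $<_1$ and $<_2$ the two words are genuinely distinct (in $<_1$ we have $\alpha_{i+1} \ge \alpha_i + 2$ so $\alpha_i \ne \alpha_{i+1}$; in $<_2$ the letter $\mathrm{in}(\mathfrak{a}_{i+1})$ is replaced by the strictly larger $\mathrm{in}(\mathfrak{a}_{i+1}) + 1 = \mathrm{in}(\widetilde{\mathfrak{b}_1})$ at the relevant position, since $\mathrm{in}(\mathfrak{a}_i) \le \mathrm{in}(\mathfrak{a}_{i+1})$ forces the first point of difference to lie within the moved block). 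Hence $\gamma^i <_{lex} \gamma^{i+1}$ strictly. Therefore, if $\alpha \le_{\db} \beta$ and $\alpha \ne \beta$, the connecting chain has $m \ge 1$ and yields $\alpha <_{lex} \gamma^1 \le_{lex} \cdots \le_{lex} \beta$, so $\alpha <_{lex} \beta$; in particular $\alpha \ne \beta$ is automatically consistent and we cannot simultaneously have $\beta \le_{\db} \alpha$, as that would give $\beta <_{lex} \alpha$, contradicting $\alpha <_{lex} \beta$. Thus $\alpha \le_{\db} \beta$ and $\beta \le_{\db} \alpha$ force $\alpha = \beta$.

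The only mild subtlety — and the step I would state most carefully — is confirming that a single application of $<_2$ strictly increases the word lexicographically and not merely weakly. The point where $\beta$ first differs from $\alpha$ is the position occupied by $\mathrm{in}(\mathfrak{a}_{i+1})$ in $\alpha$ versus $\mathrm{in}(\widetilde{\mathfrak{b}_1}) = \mathrm{in}(\mathfrak{a}_{i+1}) + 1$ in $\beta$; since $\mathfrak{a}_i$ appears unchanged as a prefix of this tail in both words and $\mathrm{in}(\mathfrak{a}_i) \le \mathrm{in}(\mathfrak{a}_{i+1})$, nothing before that position can differ, so the comparison is decided there in favour of $\beta$. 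Granting Lemmas \ref{lem:1vslex} and \ref{lem:2vslex}, this is essentially immediate, and the proposition follows. I expect no real obstacle here; the proof is a short bookkeeping argument resting entirely on the two lemmas already recorded.
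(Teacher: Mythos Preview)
Your proof is correct and follows exactly the paper's approach: reflexivity and transitivity directly from the definition, and anti-symmetry by invoking Lemmas~\ref{lem:1vslex} and~\ref{lem:2vslex} to see that each covering step is a strict lexicographic increase. One small slip in your supplementary analysis of $<_2$: in $\beta$ the tail after $\mathfrak{a}_1\ldots\mathfrak{a}_{i-1}$ begins with $\widetilde{\mathfrak{b}_1}$, not with $\mathfrak{a}_i$, so the first point of difference is already the very first position of the tail, where $\alpha$ has $\mathrm{in}(\mathfrak{a}_i)$ and $\beta$ has $\mathrm{in}(\mathfrak{a}_{i+1})+1 > \mathrm{in}(\mathfrak{a}_i)$ --- this only makes the strict inequality easier, and does not affect the validity of your argument.
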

\begin{proof}
Reflexivity and transitivity of the relation follows directly from the definition. We only prove that the relation
is anti-symmetric. By Lemma \ref{lem:1vslex} and Lemma \ref{lem:2vslex}, if $\alpha \lneq_i\beta$,
for $i=1,2$ then $\alpha\lneq_{lex} \beta$. Thus $\le_{\db}$ is anti-symmetric as $\le_{lex}$ is.
\end{proof}

The main result of this section is that the poset  $(\mathcal
R(\omega),\le_{\db})$ has a unique maximal element. As we have
described above, this result is the starting point of an algorithm
to generate reduced words from the natural one.
\begin{pro} \label{maxword}
The natural word $\eta_\omega$ of $\omega$ is the unique maximal
element in $(\mathcal R(\omega),\le_{\db})$.
\end{pro}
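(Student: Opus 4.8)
The plan is to establish two things: first, that $\eta_\omega$ is maximal, i.e.\ that no relation $\eta_\omega <_1 \beta$ or $\eta_\omega <_2 \beta$ can hold; and second, that $\eta_\omega$ is reachable from an arbitrary reduced word $\alpha$, i.e.\ $\alpha \le_{\db} \eta_\omega$ for all $\alpha \in \mathcal{R}(\omega)$. Uniqueness of the maximal element then follows: if $\beta$ were also maximal, then $\beta \le_{\db} \eta_\omega$ forces $\beta = \eta_\omega$. I would carry these out in this order.

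For maximality, I would use the characterization recalled from \cite[Proposition 5.1]{CT}: the natural word $\eta_\omega$ is a concatenation of tower words whose first letters (i.e.\ the values $\mathrm{in}(\mathfrak{a}_i)$) are strictly decreasing. Suppose $\eta_\omega <_1 \beta$. Then $\beta$ is obtained by swapping adjacent letters $\alpha_i \alpha_{i+1}$ with $\alpha_{i+1} \ge \alpha_i + 2$; but within any tower word the letters are increasing by $1$, and across a tower boundary $\mathfrak{a}_t \mathfrak{a}_{t+1}$ we have $\mathrm{in}(\mathfrak{a}_{t+1}) < \mathrm{in}(\mathfrak{a}_t) \le \mathrm{fin}(\mathfrak{a}_t)$, so the last letter of $\mathfrak{a}_t$ is strictly larger than the first letter of $\mathfrak{a}_{t+1}$, never $\ge {}+2$ below it. Hence no valid $<_1$-step exists from $\eta_\omega$. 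Similarly, a $<_2$-step from $\eta_\omega$ would require two consecutive tower words $\mathfrak{a}_i \mathfrak{a}_{i+1}$ with $\mathrm{in}(\mathfrak{a}_i) \le \mathrm{in}(\mathfrak{a}_{i+1})$, which directly contradicts the strictly decreasing first letters of $\eta_\omega$. So $\eta_\omega$ has no element above it.

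For reachability, the natural strategy is to invoke the sorting algorithm promised in the introduction (Section \ref{Section:sorting}), which takes an arbitrary reduced word of $\omega$ and produces a chain in the directed-braid poset terminating at $\eta_\omega$; formally, one shows by induction on the number of ``defects'' (pairs violating the decreasing-first-letter / increasing-within-tower pattern, or equivalently the lexicographic deficiency $\ell_{\mathrm{lex}}(\eta_\omega) - \ell_{\mathrm{lex}}(\alpha)$ measured by the position of the first disagreement) that if $\alpha \ne \eta_\omega$ then there exists $\beta$ with $\alpha <_1 \beta$ or $\alpha <_2 \beta$. The key sub-step is: given $\alpha \ne \eta_\omega$, locate the leftmost position where $\alpha$ fails to look like a natural word — either a letter that is too small sitting to the right of a larger letter (handle with $<_1$, possibly after first applying $<_2$ to break up an intervening tower via a long braid), or two adjacent towers with non-decreasing initial letters (handle with $<_2$) — and check that the resulting $\beta$ is still a reduced word of $\omega$ and is strictly lexicographically larger than $\alpha$, so the process terminates. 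Since each step strictly increases $\le_{\mathrm{lex}}$ (Lemmas \ref{lem:1vslex} and \ref{lem:2vslex}) and there are finitely many reduced words, the chain must terminate, and by the maximality argument above it can only terminate at $\eta_\omega$.

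I expect the main obstacle to be the reachability step, specifically verifying that one can always make progress: when the leftmost defect is ``a small letter trapped to the right of a large one,'' a bare $<_1$ transposition may be blocked because the letter immediately to the left differs by exactly $1$ (so it lies in the same or an adjacent tower and the short-braid restriction $\alpha_{i+1} \ge \alpha_i + 2$ fails). Showing that in that situation a $<_2$-move (a directed long braid, moving a tower word leftward letter-by-letter) is available and legal — i.e.\ that the hypotheses $\mathrm{in}(\mathfrak{a}_i) \le \mathrm{in}(\mathfrak{a}_{i+1}) < \mathrm{fin}(\mathfrak{a}_{i+1}) < \mathrm{fin}(\mathfrak{a}_i)$ hold, which by the Remark are forced once $\mathrm{in}(\mathfrak{a}_{i+1}) < \mathrm{fin}(\mathfrak{a}_i)$ and $\alpha$ is reduced — is the delicate case analysis. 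Once that dichotomy (``either a short step or a long step is always available when $\alpha \ne \eta_\omega$'') is nailed down, the rest is the routine termination-by-$\le_{\mathrm{lex}}$ argument.
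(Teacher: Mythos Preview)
Your proposal is correct but works harder than necessary, and takes a genuinely different route from the paper. The paper does \emph{not} prove reachability here at all. Instead, it runs your maximality argument in the other direction: starting from an \emph{arbitrary} maximal element $\alpha$ (which exists since $\mathcal R(\omega)$ is finite), it shows that the absence of any $<_1$-step forces $\mathrm{fin}(\mathfrak a_i)>\mathrm{in}(\mathfrak a_{i+1})$ for every consecutive pair in the tower decomposition, and then the absence of any $<_2$-step forces $\mathrm{in}(\mathfrak a_i)>\mathrm{in}(\mathfrak a_{i+1})$. Thus \emph{every} maximal element has strictly decreasing initial letters, and \cite[Proposition~5.1]{CT} says there is exactly one such reduced word, namely $\eta_\omega$. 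Uniqueness falls out immediately, with no need to build a chain from an arbitrary $\alpha$ up to $\eta_\omega$.

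The trade-off: your approach establishes the stronger conclusion that $\eta_\omega$ is the \emph{maximum} (every $\alpha$ lies below it), not merely the unique maximal element, but at the cost of essentially importing the entire sorting algorithm of Section~\ref{Section:sorting} --- and, as you correctly anticipate, the delicate part is exactly the ``progress'' dichotomy. The paper's approach sidesteps that case analysis entirely by pushing it to the structural characterization and the external citation; reachability is then proved separately (and constructively) in Section~\ref{Section:sorting}, where it is actually needed.
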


\begin{proof}
Let $\alpha$ be a maximal element in $(\mathcal
R(\omega),\le_{\db})$ and let $$\alpha=\mathfrak{a}_1\ldots
\mathfrak{a}_i\mathfrak{a}_{i+1}\ldots \mathfrak{a}_k$$ be the tower
decomposition of $\alpha$. Now since $\alpha$ is maximal, there is
no word $\beta$ such that $\alpha <_1 \beta$. But this is only
possible if for any $i,1\le i< k$, we have
$$
\mathrm{fin}(\mathfrak{a}_i) > \mathrm{in}(\mathfrak{a}_{i+1}).
$$
 Indeed, otherwise, $
\mathrm{fin}(\mathfrak{a}_i) < \mathrm{in}(\mathfrak{a}_{i+1}) $
yields  $
 \mathrm{in}(\mathfrak{a}_{i+1})-\mathrm{fin}(\mathfrak{a}_i) >2$
 since $\mathfrak{a}_{i+1}$ and $\mathfrak{a}_{i}$ are different
tower words. Hence one can  interchange
$\mathrm{fin}(\mathfrak{a}_i)$ and $\mathrm{in}(\mathfrak{a}_{i+1})$
to obtain a greater word in $(\mathcal R(\omega),\le_{\db})$.

Now suppose that for some $1\le i< k$ we have
$\mathrm{in}(\mathfrak{a}_{i}) \leq
\mathrm{in}(\mathfrak{a}_{i+1})$. Then by the above discussion we
have
$$\mathrm{in}(\mathfrak{a}_{i}) \leq
\mathrm{in}(\mathfrak{a}_{i+1})< \mathrm{fin}(\mathfrak{a}_i)$$ but
this forces that $\mathrm{fin}(\mathfrak{a}_{i+1}) \leq
\mathrm{fin}(\mathfrak{a}_{i})$, since otherwise $\alpha$ is not a
reduced word. Hence by definition of $<_2$, one can  obtained a
reduced word $\beta$ such that $\alpha <_2 \beta$, but this
contradicts to the maximality of $\alpha$. Therefore for each $1\le
i< k$, we  have
$$\mathrm{in}(\mathfrak{a}_{i}) >
\mathrm{in}(\mathfrak{a}_{i+1}).$$

Thus we have proved that any maximal element $\alpha$ in $(\mathcal
R(\omega),\le_{\db})$ has the property that in its  tower
decomposition,  the sequence of initial letters of tower words is
decreasing. But by
 \cite[Proposition 5.1]{CT}, there is a unique
word with this property, namely the natural word.
\end{proof}

Although there is a unique maximal in the braid poset, there might be many minimal elements. An example of a braid
poset with two minimal elements is the poset of the word $432123$, where the minimals are the words $124321$ and
$143213$. The full poset in this case is given as follows.
\begin{center}
\begin{tikzpicture}
  \node (max) at (0,5) {$4~3~2~123$};
  \node (a) at (0,4) {$4~3~123~1$};
  \node (b1) at (0, 3){$4~1~3~23~1$};
  \node (c2) at (0,2){$1~4~3~23~1$};
  \node (d2) at (0,1){$1~4~3~2~1~3$};
 \node (c3) at (3, 2){$4~1~3~2~1~3$};
 \node (b2) at (3, 3){$4~3~12~1~3$};
\node (d1) at (-3,1){$1~4~23~2~1$};
\node (c1) at (-3,2){$4~123~2~1$};
  \node (e) at (-3,0){$12~4~3~2~1$};
\draw  (max) -- (a) -- (b1)
--(c2)--(d2)--(c3)--(b2)--(a)--(b1)--(c3)(a)--(c1)--(d1)--(e)(c2)--(d1);
\end{tikzpicture}
\end{center}

\begin{rem}[Remark 1.]
The above result tells us that it is possible to generate all
reduced words starting from the natural word. The problem is to
determine a canonical path from the maximal element to the chosen
one. The advantage we have here is that via the directed-braid
poset, we insist a direction on the braid relations: The short braid
relation is the equality $s_is_j = s_js_i$ if $|j-i| \ge 2$, but in
the braid poset, to go from the natural word to an arbitrary word,
we can only interchange $j$ and $i$ if $j$ is smaller than $i$.
Similar comment is true for the long braid relation. Therefore, a
maximal element in this poset is a word on which the directed-braid
relations cannot be applied. In this sense, the natural word is the
only braid-free word. Hence one would expect to have a canonical
route from the natural word to any other reduced word and vice
versa. For the rest of the paper, we explain such canonical routes.
\end{rem}

\begin{rem}[Remark 2.] In \cite{KLR}, the poset of commutation classes is introduced. Given a permutation $\pi$,
two reduced words $\alpha$ and $\beta$ are in the same commutation
class if they differ from each other by the short braid relation,
hence are comparable with the partial order generated by $<_1$. This
poset is used in \cite{KLR} in relation with factorization of
Schubert cells.
\end{rem}
\section{Sorting algorithm: From a reduced word to the natural
word of a permutation}\label{Section:sorting}

By the Proposition~\ref{maxword},   the natural word of a
permutation $\omega$ is the unique maximum among all reduced words of $\omega$
in the directed-braid poset.  In this section, we introduce two
algorithms, by which one obtains the natural word $\eta_\omega$  from any
reduced word $\alpha$ of $\omega$ as a chain in this poset.

\subsection{A selection sort algorithm on reduced words}

Let $\alpha$ be a word reduced word for a permutation $\omega$, given
with its tower decomposition $\alpha=\mathfrak{a}_1\ldots
\mathfrak{a}_i\mathfrak{a}_{i+1}\ldots \mathfrak{a}_k$. We call
$\alpha$  a \textit{natural basic word} for $\omega$ if for each $1\leq i<k$ we have
$$\mathrm{fin}(\mathfrak{a}_i)>\mathrm{in}(\mathfrak{a}_{i+1}).
$$

It is easy to see the natural word of  $\omega$ is the unique natural basic word which  satisfies that  the sequence of
initial letters of its  tower words is strictly decreasing.

The selection sort algorithm on any reduced word $\alpha$ of $\omega$ aims to obtain a natural basic word, say 
$\beta$ of $\omega$,  as a chain
$$\alpha=\beta^0\leq_1 \beta^1\leq_1 \ldots \beta^k=\beta$$
of  reduced words in $(\mathcal R(\omega),\le_{\db})$.

We describe the algorithm inductively. Suppose that $\beta^j$ is
constructed.  Then we obtain $\beta^{j+1}$ as follows: Write
$$\beta^j=\mathfrak{b}_1\ldots \mathfrak{b}_r$$ in its  the tower
decomposition  and $a$ let be its smallest letter. Then $a$ is
necessarily an initial letter of some tower words in $\beta^j$ and
let $\mathfrak{b}_s$ be  the right most tower word starting with
$a$, for some $1\leq s \leq r$.

\begin{enumerate}
\item[i.]  If $s<r$ and if  $\mathrm{fin}(\mathfrak{b}_s)<
\mathrm{in}(\mathfrak{b}_{s+1})$ (necessarily
$\mathrm{in}(\mathfrak{b}_{s+1})- \mathrm{fin}(\mathfrak{b}_s)\geq
2$) then we let
$$\beta^{j+1}= \mathfrak{b}_1\ldots\mathfrak{b}_{s+1}\mathfrak{b}_{s}\ldots
\mathfrak{b}_r.
$$
\item[ii.] If $s=r$ or if $\mathrm{fin}(\mathfrak{b}_s)>
\mathrm{in}(\mathfrak{b}_{s+1})$ then  we continue by applying the
same algorithm on the right most tower word   starting with  $a$,
which is on the  left of $\mathfrak{b}_{s}$.
\item[iii.] Finally, if none of the tower words starting with $a$ in $\beta^{j}$ can move to the right  subject to the above
rule,  then we continue to apply the same algorithm with  the
smallest initial letter bigger than $a$ in  $\beta^j$.
\end{enumerate}

Observe that the above algorithm may not produce a new word,
$\beta^{j+1}$, and this happens if and only if  $\beta^j$  is a
natural basic word i.e., for any $1\leq s <r$
$$\mathrm{fin}(\mathfrak{b}_s)> \mathrm{in}(\mathfrak{b}_{s+1})$$
In this case we let $\beta=\beta^j$. On the other hand if
$\beta^{j+1}$ is produced as a result then we have
$\beta^j<_1\beta^{j+1}.$

\begin{ex} We consider the reduced word  $\alpha= 134521321654321$ to produce  a natural basic word.   We use brackets to indicate
 the tower words which are to be moved  according to the
above algorithm.
$$\begin{aligned}
\alpha=\alpha^0=& 1~~34567~~2~~1~~3~~2~~\textbf{[1]}~~6~~45~~4~~3~~2~~1\\
<_1\beta^1=& 1~~34567~~2~~1~~3~~2~~6~~\textbf{[1]}~~45~~4~~3~~2~~1\\
<_1\beta^2=& 1~~34567~~2~~1~~3~~2~~6~~45~~\textbf{[1]}~~4~~3~~2~~1\\
<_1\beta^3=& 1~~34567~~2~~1~~3~~2~~6~~45~~4~~\textbf{[1]}~~3~~2~~1\\
<_1\beta^4=& 1~~34567~~2~~\textbf{[1]}~~3~~2~~6~~45~~4~~3~~12~~1\\
<_1\beta^5=& 1~~34567~~23~~\textbf{[12]}~~6~~45~~4~~3~~12~~1\\
<_1\beta^6=& 1~~34567~~23~~6~~\textbf{[12]}~~45~~4~~3~~12~~1\\
<_1\beta^7=& 1~~34567~~23~~6~~45~~\textbf{[12]}~~4~~3~~12~~1\\
<_1\beta^8=& \textbf{[1]}~~34567~~23~~6~~45~~4~~123~~12~~1\\
<_1\beta^9=& 34567~~\textbf{[123]}~~6~~45~~4~~123~~12~~1\\
<_1\beta^{10}=& 34567~~6~~12345~~4~~123~~12~~1=\beta.
\end{aligned}
$$

\end{ex}

\subsection{An insertion sort algorithm on natural basic words}\label{Section:AlgorithmOn2}
Our next aim is to construct an algorithm which transforms a natural
basic word of a permutation to its  natural word. The algorithm is
based on the relation $<_2$.

Let $\beta=\mathfrak{b}_1\ldots \mathfrak{b}_k$ be a natural basic
word for a permutation, given with its tower decomposition.
Therefore
 $$ \mathrm{fin}(\mathfrak{b}_{i})>
\mathrm{in}(\mathfrak{b}_{i+1}) ~~\mathrm{ for ~all }~~ 1\leq i<k.$$
Note that, in this case we have
$$\begin{aligned}
\mathrm{ either }&~~ \mathrm{in}(\mathfrak{b}_{i})>
\mathrm{in}(\mathfrak{b}_{i+1})\\
\mathrm{ or }&~~ \mathrm{in}(\mathfrak{b}_{i})\leq
\mathrm{in}(\mathfrak{b}_{i+1})<\mathrm{fin}(\mathfrak{b}_{i}).
\end{aligned}$$

Observe that the second case also forces that
$$\mathrm{in}(\mathfrak{b}_{i})\leq
\mathrm{in}(\mathfrak{b}_{i+1})\leq
\mathrm{fin}(\mathfrak{b}_{i+1})<\mathrm{fin}(\mathfrak{b}_{i})
$$
since otherwise  $\beta$ can not be a reduced word. Moreover in this
case the two words
$$
\beta=\mathfrak{b}_1\ldots\mathfrak{b}_{i}\mathfrak{b}_{i+1}\ldots
\mathfrak{b}_k ~~\mathrm{and}~~
\beta'=\mathfrak{b}_1\ldots\widetilde{\mathfrak{b}_{i+1}}\mathfrak{b}_{i}\ldots
\mathfrak{b}_k
$$
 can be
obtained from one another through  a sequence of short and long
braid relations and $\beta<_2 \beta'$.

Now we are  ready to explain the algorithm which converts any
natural basic word  to the unique the natural word of the
corresponding permutation. Let $\beta$ be a natural basic word whose
tower word decomposition is of the following form.
$$\beta=\beta^0=\mathfrak{b}_1\ldots \mathfrak{b}_k.
$$
If $\mathrm{in}(\mathfrak{b}_i) > \mathrm{in}(\mathfrak{b}_{i+1})$
for all $1\leq i \leq k$ then $\beta$ is the natural word and we do
not proceed. Otherwise let $j$ be the largest index such that
$\mathrm{in}(\mathfrak{b}_i) \leq \mathrm{in}(\mathfrak{b}_{i+1})$.
As it is discussed above, this forces  that
$$\mathrm{in}(\mathfrak{b}_{i})\leq
\mathrm{in}(\mathfrak{b}_{i+1})\leq
\mathrm{fin}(\mathfrak{b}_{i+1})<\mathrm{fin}(\mathfrak{b}_{i})
$$
and we consider  the following word
\begin{equation}\label{naturalbasicword}
\mathfrak{b}_1\ldots
\mathfrak{b}_{i-1}\widetilde{\mathfrak{b}_{i+1}}\mathfrak{b}_i\ldots
\mathfrak{b}_k \end{equation} which is clearly  is braid equivalent
to $\beta^0$. We have three cases to consider.

\begin{itemize}
\item[i.] If
$\mathrm{fin}(\mathfrak{b}_{i-1})>\mathrm{in}(\widetilde{\mathfrak{b}_{i+1}})$
then $\mathfrak{b}_1\ldots
\mathfrak{b}_{i-1}\widetilde{\mathfrak{b}_{i+1}}\mathfrak{b}_i\ldots
\mathfrak{b}_k$ is the tower decomposition of a natural basic word
and we let  $$\beta^1=\mathfrak{b}_1\ldots
\mathfrak{b}_{i-1}\widetilde{\mathfrak{b}_{i+1}}\mathfrak{b}_i\ldots
\mathfrak{b}_k.$$
\item[ii.]
If
$\mathrm{fin}(\mathfrak{b}_{i-1})+1=\mathrm{in}(\widetilde{\mathfrak{b}_{i+1}})$
 then we naturally concatenate
$\mathfrak{b}_{i-1}$ and
$\widetilde{\mathfrak{b}_{i+1}}$ to get the tower
decomposition of this word. Since
$$\mathrm{fin}(\mathfrak{b}_{i-1}\widetilde{\mathfrak{b}_{i+1}})>\mathrm{in}(\mathfrak{b}_{i}),$$
 we let $\beta^1=\mathfrak{b}_1\ldots
\mathfrak{b}_{i-1}\widetilde{\mathfrak{b}_{i+1}}\mathfrak{b}_i\ldots
\mathfrak{b}_k$ be the natural basic word in
\eqref{naturalbasicword}.
\item[iii.]
If
$\mathrm{fin}(\mathfrak{b}_{i-1})+1<\mathrm{in}(\widetilde{\mathfrak{b}_{i+1}})$
then the word in \eqref{naturalbasicword} is not a natural basic word. But
then one can move $\widetilde{\mathfrak{b}_{i+1}}$ to the left of
$\mathfrak{b}_{i-1})$ by using short braid relations and  continue
in the similar manner, if necessary, until the resulting word is a
natural basic word. In this case we let $\beta^1$ be this natural
basic word.
\end{itemize}

The above algorithm yields that $\beta^0 <_{d-braid} \beta^1$.  Now
applying the above algorithm repeatedly we obtain a sequence of
natural basic words
$$\beta=\beta^0 < \beta^1 \ldots .
$$
This sequence  terminates after finitely many steps at some word,
say $\beta^r$, in which the sequence of  initial letters  of each
tower words is strictly decreasing, i.e $\beta^r$ is the natural
word of the corresponding permutation.

\begin{ex} Observe that $\beta= 2345678~~234~~1234567~~56$ is a
natural basic word given by its tower decomposition. İn the
following we use brackets to indicate the tower words subject to
move according to above algorithm.

$$\begin{aligned}
\beta=\beta^0=& ~ 2345678~~234~~1234567~~[\textbf{56}]~~ \\
<_{d-braid}\beta^1=& ~2345678~~[\textbf{67}]~~234~~1234567~~\\
<_{d-braid} \beta^2=& ~ 78~~2345678~~[\textbf{234}]~~1234567~~\\
<_{d-braid} \beta^3=&  ~78~~345~~2345678~~1234567  \\
\end{aligned}$$

For another example we consider $\gamma=3456~~5~~1234~~123~~12~~1$
which is also a   natural basic word given with its tower
decomposition. Then
$$\begin{aligned}
\gamma=\gamma^0=&3456~~5~~1234~~123~~12~~[\textbf{1}] \\
<_{d-braid} \gamma^1=&~3456~~5~~1234~~123~~[\textbf{2}]~~12 \\
<_{d-braid} \gamma^2=&~3456~~5~~1234~~[\textbf{3}]~~123~~12 \\
<_{d-braid} \gamma^3=&~3456~~5~~4~~1234~~123~~[\textbf{12}] \\
<_{d-braid} \gamma^4=&~3456~~5~~4~~1234~~[\textbf{23}]~~123 \\
<_{d-braid} \gamma^5=&~3456~~5~~4~~34~~1234~~[\textbf{123}] \\
<_{d-braid}\gamma^6=&~3456~~[\textbf{5}]~~4~~34~~234~~1234 \\
<_{d-braid}\gamma^7=&~6~~3456~~[\textbf{4}]~~34~~234~~1234 \\
<_{d-braid}\gamma^8=&~6~~5~~3456~~[\textbf{34}]~~234~~1234 \\
<_{d-braid}\gamma^{9}=&~6~~5~~45~~3456~~234~~1234=\eta \\
\end{aligned}$$
\end{ex}

\section{Generation algorithm: from the natural word to a reduced
word} \label{Section:generation}

The sorting  algorithm introduced in the previous section shows that
there is a canonical route from an arbitrary reduced word to the
natural word. In this section, we will try to reverse this algorithm
to get a generation theorem.

The generation algorithm of the reduced words of any permutation
$\omega$, as the sorting algorithm suggests, consists of two parts.
In the first part, we only allow the tower words of the natural word
of $\omega$ to pass  each other by the \emph{passage operation},
whose definition arises from the insertion sort algorithm.  We call
each word obtained in this way a basic word, in fact some of the
words are natural basic words. In the next step, by taking the
selection sort algorithm  into account, we apply \emph{restricted
shuffle operation} on each basic words to obtain all reduced words
of $\omega$.

\subsection{Basic words of a tower diagram}\label{Subsection:basicwords}
We begin with preliminary definitions.

 Let $b\in \mathbb{Z}^+$ and let
$\alpha=\mathfrak{a}_1,\ldots,\mathfrak{a}_r$ be a reduced word of a
permutation given by its tower decomposition. If
$b_0\mathfrak{a}_1\mathfrak{a}_2\ldots\mathfrak{a}_r$ is a reduced
word then the \textit{track sequence of $b$ through $\alpha$} is
 the largest finite sequence of terms
$$ b_0,b_1\ldots, $$
such that $b_0=b$ and for $i\geq 1$
$$b_{i}:=
 \begin{cases} (b_{i-1})-1 ~~& \text{if}~~ \mathrm{in}(\mathfrak{a}_i)<b_{i-1}\leq \mathrm{fin}(\mathfrak{a}_i) \\
                 b_{i-1} ~~& \text{if either}~~ b_{i-1}\leq \mathrm{in}(\mathfrak{a}_i)-2 ~~ \text{or}~~
                   b_{i-1}\geq \mathrm{fin}(\mathfrak{a}_i)+2, \\
                  \text{undefined}~~ & \text{otherwise.}
\end{cases}
$$
It is clear that if $b_i$ is not defined, for some $1\leq i\leq r$,
then $b_{i+1}$ is not defined. Hence the maximum possible number of
elements in this sequence $r+1$.

\begin{defn} Let $b\in \mathbb{Z}^+$ and let
$\alpha=\mathfrak{a}_1,\ldots,\mathfrak{a}_r$ be a reduced word of a
permutation given by its tower decomposition. If
$b_0\mathfrak{a}_1\mathfrak{a}_2\ldots\mathfrak{a}_r$ is not a
reduced word then we set $\mathrm{passwords}(b,\alpha)=\varnothing$.
Otherwise we set
$$\begin{aligned}
\mathrm{passwords}(b,\alpha)&:=\{\alpha^0=b_0\mathfrak{a}_1\mathfrak{a}_2\ldots\mathfrak{a}_r\} \\
&\cup\{\alpha^i=\mathfrak{a}_1\ldots\mathfrak{a}_{i}b_i\mathfrak{a}_{i+1}\ldots\mathfrak{a}_r
\mid 1\leq i\leq s\}
\end{aligned}$$
where $ b_0,b_1\ldots,b_s $ is the track sequence of $b$ through
$\alpha$.
\end{defn}

\begin{ex} Let $\alpha = 3456~ 789~ 12345
$, and $b=b_0= 6$.  Then
$\mathrm{passwords}(b,\alpha)$ consist of the following reduced
words
\begin{eqnarray*}
\alpha^0 &=& [\textbf{6}]~ 3456~ 789~ 12345\\
\alpha^1 &=& 3456~ [\textbf{5}]~ 789~ 12345\\
\alpha^2 &=& 3456~ 789~ [\textbf{5}]\, 12345\\
\alpha^3 &=& 3456~ 789~ 12345 ~[\textbf{4}]
\end{eqnarray*}
where the numbers in the brackets,  gives the track sequence of
$b=6$ through $\alpha$. Namely, $b_0=6, b_1=5, b_2=5,b_3=4.$ One can easily check the following examples.
$$ \begin{aligned}  \mathrm{passwords}(10,\alpha)=& \{10~ 3456~ 789~ 12345,~ 3456~10~ 789~ 12345 \} \\
\mathrm{passwords}(2,\alpha)=& \{23456~ 789~ 12345\}\\
\mathrm{passwords}(1,\alpha)=&\varnothing
\end{aligned}
$$

\end{ex}

\begin{defn} Let $\alpha$ and  $\beta$  be two reduced words such
that the concatenation $\beta\alpha$ is  also reduced. We  define
the set of all \textit{passage words of $\beta$ through $\alpha$} as
follows: Write $\beta=\beta_1\beta_2\ldots \beta_n$, then
$$\begin{aligned}
\mathrm{passwords}(\beta,\alpha)=&\mathrm{passwords}(\beta_1\beta_2\ldots
\beta_n,\alpha)\\
:=&\bigcup_{\tilde{\alpha} \in \mathrm{passwords}(\beta_n,\alpha)}
\mathrm{passwords}(\beta_1\beta_2\ldots \beta_{n-1},\tilde{\alpha}).
\\
\vdots\\
 :=&\bigcup_{\tilde{\alpha} \in
\mathrm{passwords}(\beta_2\ldots \beta_n,\alpha)}
\mathrm{passwords}(\beta_1,\tilde{\alpha}).
\end{aligned}
$$
Moreover we let that $\mathrm{passwords}(\beta,\alpha)=\varnothing$
if $\beta\alpha$ is  not a reduced word and that
$\mathrm{passwords}(\beta,\alpha)=\{\alpha\}$ if $\beta$ is the
empty word. Finally, for any two sets $A$ and  $B$  of words, we
define
$$[ B,A]:=\bigcup_{\alpha\in A,\beta\in B} \mathrm{passwords}
(\beta,\alpha).$$
\end{defn}

\begin{ex} Consider $\alpha = 3456~ 78$, and $\beta= 96$.
Then
$$\begin{aligned}\mathrm{passwords}(6,3456~ 78)=&\{6~ 3456~
78,~ 3456~ 5~ 78 ,~3456~ 78~ 5~ \}\\
\mathrm{passwords}(9~6,3456~ 78)=& \{9~6~ 3456~ 78,~
9~3456~ 5~ 78 ,~9~3456~ 78~ 5~ \\
& ~~6~9~ 3456~ 78,~6~ 3456~9~ 78,\\
&~~9~3456~ 5~ 78,~3456~9~ 5~ 78,~3456~ 5~9~ 78,\\
& ~~9~3456~ 78~ 5,~3456~9~ 78~ 5 \}
\end{aligned}
$$
\end{ex}

 We are now ready to define basic words of a permutation.

\begin{defn} Let $\eta_w=\eta_1\ldots\eta_k$ be the tower
decomposition of the natural word of $\omega$ and let
$N_i=\{\eta_i\}$ for each $1\leq i\leq k$. Then the
 \textit{set of basic words} for $\omega$ is the set given by
 $$
\mathrm{Basic}(\omega) :=[[\ldots [[N_1,N_{2}], N_{3}],\ldots,
N_{k-1} ],N_k ].
$$
\end{defn}

The following result follows directly from the
above definition. We leave the justification to the reader.
\begin{lem}\label{lem:basicProp} Let $\omega $ be a permutation. Then any basic word in $\mathrm{Basic}(\omega) $ is reduced and is braid related to
the natural word of $\omega$.
\end{lem}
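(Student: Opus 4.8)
The plan is to peel off the definition of $\mathrm{Basic}(\omega)$ one layer at a time and argue by induction on the number of tower words, proving two properties simultaneously: that every word produced is reduced, and that it is braid-equivalent to the natural word $\eta_\omega$. The key technical observation is that the passage operation $\mathrm{passwords}(\beta,\alpha)$ never leaves the reduced-word world (by construction, since we only form $\mathrm{passwords}(b,\alpha)$ when $b_0\mathfrak{a}_1\ldots\mathfrak{a}_r$ is already reduced, and we set it to $\varnothing$ otherwise), and that each word in $\mathrm{passwords}(b,\alpha)$ is obtained from $b\mathfrak{a}_1\ldots\mathfrak{a}_r$ by a sequence of braid relations. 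I would establish this latter fact first as the base of the induction.

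First I would check the atomic case: fix a single letter $b$ and a reduced word $\alpha = \mathfrak{a}_1\cdots\mathfrak{a}_r$ with $b_0\mathfrak{a}_1\cdots\mathfrak{a}_r$ reduced. I claim that for each $i$ in the track sequence, the word $\alpha^i = \mathfrak{a}_1\cdots\mathfrak{a}_i b_i \mathfrak{a}_{i+1}\cdots\mathfrak{a}_r$ is reduced and braid-equivalent to $\alpha^{i-1}$. The three cases in the definition of $b_i$ correspond exactly to: (a) $\mathrm{in}(\mathfrak{a}_i) < b_{i-1} \le \mathrm{fin}(\mathfrak{a}_i)$, where moving $b_{i-1}$ past the tower word $\mathfrak{a}_i$ uses one long braid relation together with short braid relations and the value drops by one; (b) $b_{i-1} \le \mathrm{in}(\mathfrak{a}_i) - 2$ or $b_{i-1} \ge \mathrm{fin}(\mathfrak{a}_i) + 2$, where $b_{i-1}$ commutes past $\mathfrak{a}_i$ by short braid relations only; the "undefined" case is precisely where the concatenation fails to be reduced. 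Since braid relations preserve both reducedness and the underlying permutation, each $\alpha^i$ is reduced and represents the same permutation as $\alpha^0 = b\alpha$, hence as $b\alpha$. This handles $\mathrm{passwords}(b,\alpha)$.

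Next I would extend to $\mathrm{passwords}(\beta,\alpha)$ for a general word $\beta = \beta_1\cdots\beta_n$ by induction on $n$, directly following the recursive definition: each $\tilde\alpha \in \mathrm{passwords}(\beta_n,\alpha)$ is reduced and braid-equivalent to $\beta_n\alpha$ by the atomic case, and then by the inductive hypothesis every word in $\mathrm{passwords}(\beta_1\cdots\beta_{n-1},\tilde\alpha)$ is reduced and braid-equivalent to $\beta_1\cdots\beta_{n-1}\tilde\alpha$, which is braid-equivalent to $\beta_1\cdots\beta_{n-1}\beta_n\alpha = \beta\alpha$. The empty-word and non-reduced conventions in the definition make the base and degenerate cases immediate. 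Consequently, for sets of words $A$, $B$ all of whose members are reduced, every element of $[B,A]$ is reduced and is braid-equivalent to $\beta\alpha$ for the corresponding $\alpha\in A$, $\beta\in B$.

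Finally I would unwind the nested bracket expression $[[\ldots[[N_1,N_2],N_3],\ldots,N_{k-1}],N_k]$ by induction on $k$. Starting from $N_1 = \{\eta_1\}$, the set $[N_1,N_2]$ consists of reduced words braid-equivalent to $\eta_1\eta_2$; assuming inductively that the $j$-fold bracket consists of reduced words braid-equivalent to $\eta_1\eta_2\cdots\eta_j$, applying $[\ -\ , N_{j+1}]$ yields, by the previous paragraph, reduced words braid-equivalent to $(\eta_1\cdots\eta_j)\eta_{j+1} = \eta_1\cdots\eta_{j+1}$. At $j = k$ this gives that every word in $\mathrm{Basic}(\omega)$ is reduced and braid-equivalent to $\eta_1\cdots\eta_k = \eta_\omega$, as claimed. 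The only place requiring genuine care — the "main obstacle" — is the verification in the atomic case that the three branches of the track-sequence definition are exhausted by and faithfully encode the braid moves, in particular that the "undefined" branch coincides with non-reducedness of $b_{i-1}\mathfrak{a}_i\cdots\mathfrak{a}_r$; everything else is bookkeeping along the recursion.
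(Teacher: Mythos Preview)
Your proposal is correct and is precisely the kind of justification the paper has in mind: the paper itself gives no proof, stating only that the result ``follows directly from the above definition'' and leaving the details to the reader. Your inductive unwinding of the recursion---first for a single letter $b$, then for a word $\beta$, then through the nested brackets defining $\mathrm{Basic}(\omega)$---is the natural way to supply those details, and the braid computations you describe in the atomic case are right.

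One small caveat: your closing remark that the ``undefined'' branch of the track sequence ``coincides with non-reducedness of $b_{i-1}\mathfrak{a}_i\cdots\mathfrak{a}_r$'' is not quite accurate and, more importantly, is not needed. When $b_{i-1}=\mathrm{in}(\mathfrak{a}_i)-1$ or $b_{i-1}=\mathrm{fin}(\mathfrak{a}_i)+1$ the word $b_{i-1}\mathfrak{a}_i$ is still reduced; what fails is only the possibility of pushing $b_{i-1}$ through $\mathfrak{a}_i$ to obtain an expression of the form $\mathfrak{a}_i\,x$ for a single letter $x$. The lemma only claims that every word actually produced is reduced and braid-equivalent to $\eta_\omega$, and for that you only need that each defined step $\alpha^{i-1}\rightsquigarrow\alpha^i$ is a genuine braid move---which you have argued correctly. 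So the ``main obstacle'' you flag is in fact no obstacle at all.
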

\subsection{Restricted shuffle}\label{Subsection:restrictedshuflle}

The second step of the generation algorithm is the restricted shuffle. This operation is a restriction  of the well-
known shuffle operation on words. Recall that, given two words $\alpha$ and $\beta$, a shuffle of $\beta$ over
$\alpha$ is obtained  by placing the letters of $\beta$ arbitrarily between the letters of $\alpha$ without changing
the order of letters of $\beta$. The set of all shuffles of $\beta$ over $\alpha$, denoted by $\mathrm{Sh}(\alpha,
\beta)$ can be obtained by  first concatenating  $\beta$ to the right of $\alpha$  to obtain a new word $\alpha\,
\beta$ and then moving the letters of $\beta$ to the left, without changing their orders, until  the word $\beta\,
\alpha$ is obtained.

On the other hand, the restricted shuffle employs the same idea by
adding a restriction: A letter $\beta_i$ of $\beta$ can pass a
letter $\alpha_j$ of $\alpha$ if and only if $\alpha_j\notin \{
\beta_i-1, \beta_i,\beta_i+1 \}$. With this definition, it is clear
that we are referring to the short braid relation. More precise
definition is as follows.

\begin{defn}
Suppose that the letters in  $\alpha=\alpha_1\ldots\alpha_n$ and
$\beta=\beta_1\beta_2\cdots \beta_m$ are colored by red and blue,
respectively. A \textit{restricted shuffle} of $\alpha$ with $\beta$
is a word $w=w_1\ldots w_{n+m}$ of $n$ red and $m$ blue letters
satisfying the following conditions.
\begin{enumerate}
\item The restrictions of $w$ on the red and blue letters give the words $\alpha$ and $\beta$ respectively.
\item If, for some $1\leq k\leq m$ and
 $1\leq i\leq n$, the letter $\beta_k$ lies to the left
$\alpha_i$ in  $w$   then none of $\{\beta_k-1,\beta_k,\beta_k+1\}$
lies in $\alpha_i\ldots \alpha_n $.
\end{enumerate}
We denote by $\mathrm{ResSh}(\alpha,\beta)$ the set of all restricted shuffles of $\alpha$ with $\beta$.
\end{defn}

\begin{ex} Let $\alpha = 13425$ and $\beta = 37$ and  color the word $\beta$ by
boldface. Then
$$\mathrm{ResSh}(\alpha,\beta)=\{
13425 \textbf{37}, 1342\textbf{3}5\textbf{7}, 1342\textbf{37} 5\}.
$$
For  $\alpha = 13465$ and $\beta = 37$ we have
$$\mathrm{ResSh}(\alpha,\beta)=\{
13465 \textbf{37}, 1346\textbf{3}5\textbf{7}, 1346\textbf{37} 5,
134\textbf{3}65\textbf{7},  134\textbf{3}6\textbf{7}5\}.
$$
\end{ex}

The following result follows easily from the definition of the restriction shuffle and the definitions of the braid
relations. We leave the straightforward proof to the reader.

\begin{lem}\label{lem:basicResSh}
Let $\alpha$,  $\beta$ and the concatenation  $\alpha\beta$ be
reduced words of  some permutations. Then
\begin{enumerate}
\item[i.] any restricted shuffle of $\alpha$ with $\beta$ is also reduced and
\item[ii.] the restricted shuffles of $\alpha$ with $\beta$ are pairwise distinct.
\end{enumerate}
\end{lem}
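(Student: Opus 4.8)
The plan is to treat the two claims in turn, both by tracking what happens as the letters of $\beta$ are moved leftward through $\alpha$, one elementary transposition at a time. Recall that $\mathrm{ResSh}(\alpha,\beta)$ is, by definition, exactly the set of words reachable from $\alpha\beta$ by repeatedly swapping a blue letter $\beta_k$ to the left past an adjacent red letter $\alpha_i$ whenever $\alpha_i\notin\{\beta_k-1,\beta_k,\beta_k+1\}$, i.e. whenever $|\alpha_i-\beta_k|\ge 2$. Such a swap is precisely an application of the short braid relation $s_{\alpha_i}s_{\beta_k}=s_{\beta_k}s_{\alpha_i}$. For part (i), I would argue by induction on the number of such elementary swaps used to produce a given $w\in\mathrm{ResSh}(\alpha,\beta)$ from $\alpha\beta$: the base case is $w=\alpha\beta$, which is reduced by hypothesis, and each swap merely applies a braid relation, hence preserves both the underlying permutation and the property of being reduced (braid moves never change the length of the word). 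Thus every $w\in\mathrm{ResSh}(\alpha,\beta)$ is a reduced word for the same permutation as $\alpha\beta$; in particular condition (2) in the definition is automatically self-consistent, and no separate check that the intermediate words stay reduced is needed.

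For part (ii), I would show that two distinct restricted shuffles $w\ne w'$ give distinct words by looking at the leftmost position where their red/blue coloring patterns differ. Write $w=w_1\ldots w_{n+m}$ and $w'=w'_1\ldots w'_{n+m}$, and let $j$ be the first index with a different color in $w$ and $w'$; say $w_j$ is blue and $w'_j$ is red. By condition (1), the blue subword of both is $\beta$ and the red subword of both is $\alpha$; since the colorings agree on positions $1,\ldots,j-1$, the prefixes $w_1\ldots w_{j-1}$ and $w'_1\ldots w'_{j-1}$ consist of the same letters in the same order, so $w_j=\beta_k$ for some $k$ and $w'_j=\alpha_i$ for some $i$, where $\beta_k$ is the $(k)$-th letter of $\beta$ not yet used and $\alpha_i$ is the $(i)$-th letter of $\alpha$ not yet used. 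Now I use condition (2): in $w$, the blue letter $\beta_k$ sits to the left of the red letter $\alpha_i$ (which must still appear later in $w$, since the agreeing prefix has not consumed it), so none of $\{\beta_k-1,\beta_k,\beta_k+1\}$ occurs among $\alpha_i,\ldots,\alpha_n$; in particular $|\alpha_i-\beta_k|\ge 2$, so $\alpha_i\ne\beta_k$ as integers. Hence $w_j=\beta_k\ne\alpha_i=w'_j$ already as plain words, and $w\ne w'$.

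The main obstacle — really the only subtlety — is part (ii): one has to be careful that the letter $\alpha_i$ which appears at position $j$ in $w'$ genuinely still appears \emph{after} position $j$ in $w$, so that condition (2) applies to the pair $(\beta_k,\alpha_i)$. This follows because the common prefix $w_1\ldots w_{j-1}$ uses exactly the same multiset of $\alpha$-letters as $w'_1\ldots w'_{j-1}$, and $w'_j=\alpha_i$ is the next unused red letter, hence $\alpha_i$ has not yet appeared in $w$ either and must still lie somewhere in $w_j\ldots w_{n+m}$; since $w_j=\beta_k$ is blue, $\alpha_i$ lies strictly to the right of the blue $\beta_k$ in $w$. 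Everything else is bookkeeping: part (i) is a one-line induction once one observes each restricted-shuffle swap is a short braid move, and the pairwise-distinctness argument needs only the elementary observation that the restriction forbidding $\beta_k$ from passing $\beta_k$ itself prevents the offending red and blue letters from being equal integers.
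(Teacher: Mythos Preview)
Your argument is correct and is exactly the kind of straightforward verification the paper has in mind; the paper itself gives no proof, stating only that the lemma ``follows easily from the definition of the restricted shuffle and the definitions of the braid relations'' and leaving it to the reader. Your treatment of part~(ii), locating the first position where the two colorings disagree and using condition~(2) to force the underlying letters at that position to differ, is the one nontrivial step and you have handled it cleanly.
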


As in the case of passage words, we can generalize the above
operation to a restricted shuffle of several words by induction. Let
$u_1,u_2,\ldots,u_n$ be some words. Then we define
\[
\mathrm{ResSh}(u_1,u_2,\ldots,u_n) :=
\bigcup_{\alpha\in\mathrm{ResSh}(u_1,u_2,\ldots u_{n-1})}
\mathrm{ResSh}(\alpha,u_n).
\]
\subsection{Generation Theorem}

We are now ready to state the generation theorem

\begin{thm}[Generation Theorem]\label{thm:generation}
Let  $\omega$ be a permutation. Then
\[
\mathrm{Red}(\omega)= \bigcup_{\alpha\in \mathrm{Basic}(\omega)}
\mathrm{ResSh}(\mathfrak{a}_1,\mathfrak{a}_2,\ldots,\mathfrak{a}_r)
\]
where $\alpha\in\mathrm{Basic}(\omega)$  is given by  its tower
decomposition $\alpha =
\mathfrak{a}_1\mathfrak{a}_2\ldots\mathfrak{a}_r$.
\end{thm}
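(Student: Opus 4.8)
The plan is to prove the two inclusions separately, using the two halves of the sorting algorithm from Section \ref{Section:sorting} as the guiding principle. For the inclusion $\bigcup_{\alpha\in\mathrm{Basic}(\omega)}\mathrm{ResSh}(\mathfrak{a}_1,\ldots,\mathfrak{a}_r)\subseteq\mathrm{Red}(\omega)$, I would argue as follows. By Lemma \ref{lem:basicProp}, each basic word $\alpha=\mathfrak{a}_1\ldots\mathfrak{a}_r$ is reduced and braid-related to $\eta_\omega$, hence represents $\omega$. The iterated restricted shuffle $\mathrm{ResSh}(\mathfrak{a}_1,\ldots,\mathfrak{a}_r)$ is built from nested applications of the binary operation $\mathrm{ResSh}(-,-)$; by Lemma \ref{lem:basicResSh}(i) applied inductively, every word produced is reduced, and since each step only reorders letters via short braid moves $s_is_j=s_js_i$ with $|i-j|\ge 2$ (this is exactly the pass condition $\alpha_j\notin\{\beta_i-1,\beta_i,\beta_i+1\}$), the underlying permutation is unchanged. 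Thus every element of the right-hand side lies in $\mathrm{Red}(\omega)$.

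The substance is the reverse inclusion $\mathrm{Red}(\omega)\subseteq\bigcup_{\alpha\in\mathrm{Basic}(\omega)}\mathrm{ResSh}(\mathfrak{a}_1,\ldots,\mathfrak{a}_r)$, and here I would run the sorting algorithm in reverse. Fix an arbitrary $w\in\mathrm{Red}(\omega)$. First apply the selection sort algorithm of Section \ref{Section:sorting}: it produces a chain $w=\beta^0<_1\beta^1<_1\cdots<_1\beta^k=\beta$ terminating at a natural basic word $\beta$ of $\omega$. I would then show that $w\in\mathrm{ResSh}(\mathfrak{b}_1,\ldots,\mathfrak{b}_p)$ where $\beta=\mathfrak{b}_1\ldots\mathfrak{b}_p$ is the tower decomposition of $\beta$: indeed, the selection sort moves are exactly short braid moves that slide a tower word (letter by letter) to the \emph{right} past an adjacent tower word, so reading the chain backwards expresses $w$ as the result of starting from the concatenation $\mathfrak{b}_1\cdots\mathfrak{b}_p$ of the tower words and interleaving them by short braid moves — that is, as a restricted shuffle. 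One must check that the pass condition in the definition of $\mathrm{ResSh}$ is precisely what the relation $<_1$ records, namely that a letter only moves past a letter differing from it by at least $2$; this is immediate from the two definitions. Next, I would show that every natural basic word $\beta$ of $\omega$ arises from some basic word of $\omega$ after a further restricted shuffle — in fact that natural basic words \emph{are} basic words (remarked after the definition in Subsection \ref{Subsection:basicwords}), so it suffices to see $\beta\in\mathrm{Basic}(\omega)$. For this I apply the insertion sort algorithm of Section \ref{Section:AlgorithmOn2}: it produces a chain $\beta=\beta^0<_{\db}\beta^1<_{\db}\cdots<_{\db}\beta^r=\eta_\omega$ of natural basic words, each step moving a tower word to the left past an adjacent one with the long-braid-type shift $\mathfrak{b}_{i+1}\mapsto\widetilde{\mathfrak{b}_{i+1}}$ and possibly some short braid slides. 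Reversing this chain and comparing with the passage operation $\mathrm{passwords}(-,-)$ — whose track sequence is exactly the "decrement by one when passing through a tower" rule dual to the $\widetilde{(-)}$ increment — exhibits $\beta$ as obtained from $\eta_\omega$ by repeatedly passing tower words (or their letters) back through the word, which is precisely membership in $[[\ldots[N_1,N_2],\ldots],N_k]=\mathrm{Basic}(\omega)$. Composing, $w$ lies in a restricted shuffle of the tower words of some basic word, as required.

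I expect the main obstacle to be the bookkeeping in the reverse direction: one must verify carefully that the selection-sort chain can always be re-read as a \emph{single} restricted shuffle of the final tower words (rather than an unstructured sequence of short braid moves), and dually that the insertion-sort chain unwinds into the nested passage construction defining $\mathrm{Basic}(\omega)$. The delicate points are (a) that when a tower word $\mathfrak{b}_{i-1}\widetilde{\mathfrak{b}_{i+1}}$ re-fuses into a single tower word (cases ii and iii of the insertion sort), the tower decomposition changes, so one must track how this matches the recursive structure $\mathrm{passwords}(\beta,\alpha)=\bigcup_{\tilde\alpha}\mathrm{passwords}(\beta_1\ldots\beta_{n-1},\tilde\alpha)$, and (b) that the restricted shuffle's global condition "none of $\{\beta_k-1,\beta_k,\beta_k+1\}$ lies in $\alpha_i\ldots\alpha_n$" is genuinely equivalent to the step-by-step legality of the selection-sort moves, which requires a short induction on word length. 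Everything else is routine: reducedness and the permutation being preserved are handled by the cited lemmas, and the termination of both sorting algorithms is already established in Section \ref{Section:sorting}.
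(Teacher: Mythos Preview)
Your proposal is correct and follows essentially the same approach as the paper: the easy inclusion via Lemmas \ref{lem:basicProp} and \ref{lem:basicResSh}, and the reverse inclusion by running the two-phase sorting algorithm and observing that the selection sort inverts the restricted shuffle while the insertion sort inverts the passage-word construction. In fact your outline is more careful than the paper's own proof, which dispatches the reverse inclusion in three sentences and does not explicitly address the bookkeeping issues you flag in (a) and (b).
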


\begin{proof}
It is clear from Lemma \ref{lem:basicResSh} and Lemma
\ref{lem:basicProp} that the right hand side is contained in the
left hand side. To prove the converse inclusion, it is sufficient to
show that the sorting algorithm is inverse to the generation
algorithm. But clearly, any step in the selection sort algorithm  is
a restricted shuffle. Moreover, these steps are consistent with the
order of the parenthesis in the restricted shuffle of the generation
algorithm. Similarly, any natural basic word is a basic word. Indeed
the reverse of each step in the insertion sort algorithm is a
passage word construction.
\end{proof}
\section{Example: Longest words}\label{sec:ex}
As our first example, we produce all reduced expressions for the longest word in $S_4$.
 The general case, for an arbitrary $S_n$, can be treated in the same way.
Recall that the longest word $\omega_0$ is the reverse of the
identity permutation and  its natural word is given by  $\eta =
3\,\, 23\,\, 123$ in its tower decomposition. Then
$$\mathrm{Basic}(\omega_0)=[[\{3\},\{23\}],\{123\} ].
$$
Here $[ \{3\},\{23\}]= \mathrm{passwords}(\mb{3}, 23) = \{ \mb{3}\,
23,23\, \mb{2} \}$ and hence
$$\mathrm{Basic}(\omega_0)=
\mathrm{passwords}(\mb{3\, 23}, 123)\cup \mathrm{passwords}(\mb{23\,
2}, 123) $$ where
$$\begin{aligned}
\mathrm{passwords}(\mb{3\, 23}, 123) =& \{ \mb{3\, 23}\, 123, \mb{3\, 2}\, 123\, \mb{2}, \mb{3}\, 123\, \mb{12} ,123\, \mb{2\, 12}, 123\, \mb{12 \, 1} \}\\
\mathrm{passwords}(\mb{23\, 2}, 123) =& \{ \mb{23\, 2}\, 123, \mb{23}\, 123\, \mb{1}, \mb{2}\, 123\, \mb{2\,1} ,123\, \mb{12\, 1} \}.
\end{aligned}$$
 Note that the last
elements of the above sets coincide and we omit one of them. To
obtain the set of all reduced words, it remains to apply restricted
shuffle to each of the basic words which are listed below.
\begin{eqnarray*}
\mathrm{ResSh}(3,23,123) &=& \{ 3\, 23\, \mb{123}, 3\, 2\, \mb{1}\, 3\, \mb{23} \}\\
\mathrm{ResSh}(3,2,123, 2) &=& \{ 3\, 2\, 123 \, 2\}\\
\mathrm{ResSh}(3,123, 12) &=& \{ 3\, \mb{123}\, 12, \mb{1} \, 3\, \mb{23}\, 12,  3\, \mb{12} \, 1 \, \mb{3}\, 2, \mb{1} \, 3\,
\mb{2}\, 1\, \mb{3}\, 2\}
\end{eqnarray*}

\begin{eqnarray*}
\mathrm{ResSh}(123,2,12) &=& \{ 123\, 2\, 12\}\\
\mathrm{ResSh}(23,2,123) &=& \{ 23\, 2\, 123\}\\
\mathrm{ResSh}(23,123, 1) &=& \{ 23\, \mb{123}\, 1, 2\, \mb{1} \, 3\, \mb{23}\, 1, 23 \, \mb{12}\, 1\, \mb{3}, 2\, \mb{1}\, 3\,
\mb{2}\, 1\, \mb{3} \}\\
\mathrm{ResSh}(2,123,2,1) &=& \{ 2\, 123\, 2\, 1\}\\
\mathrm{ResSh}(123,12,1) &=& \{ 123\, \mb{12}\, 1, 12\,\mb{1} \,3 \,\mb{2}\, 1    \}
\end{eqnarray*}
Finally, the union of all these restricted shuffles gives us the
full set of reduced words for the longest permutation of $S_4$. Note
that there are $16$ reduced words in the union and the number
coincides with the one that Stanley's formula \cite{S} gives.

\end{document}